\numberwithin{equation}{section}
\def\beq{\begin{eqnarray}}
\def\eeq{\end{eqnarray}}
\def\beqs{\begin{eqnarray*}}
\def\eeqs{\end{eqnarray*}}
\def\mz{{\mathbb Z}}
\def\mc{{\mathbb C}}
\def\dim{{\hbox{\rm dim}}}
\newfont{\df}{eufm10}
\def\mc{{\mathbb C}}
\def\deg{\hbox{\rm deg}}
\def\sg{\frak{g}}
\def\sgt{\frak{g}_{\rm tor}}
\def\sga{\widehat{\frak{g}}}
\title[]
{Finite dimensional modules over quantum toroidal algebras}
\author[Xia]{Limeng Xia}
\thanks{E-mail address: xialimeng@ujs.edu.cn}
\date{}
\begin{document}
\maketitle
\centerline{\it Institute of Applied System Analysis, Jiangsu University}
\centerline{\it Zhenjiang  212013,  Jiangsu Prov. China}

\begin{abstract}
In this paper, for all generic $q\in\mc^*$, if $\sg$ is not of type $A_1$,  we prove that the quantum toroidal algebra $U_q(\sgt)$ has no nontrivial finite dimensional simple module.
\vskip1mm \noindent
{\bf Key Words:}  \quad quantum toroidal algebra, \quad finite dimensional module
\vskip3mm \noindent
{\bf AMS Subject Classification (2010):}  \quad 17b37
 \end{abstract}

\newtheorem{theo}{Theorem}[section]
\newtheorem{theorem}[theo]{Theorem}
\newtheorem{defi}[theo]{Definition}
\newtheorem{lemma}[theo]{Lemma}
\newtheorem{coro}[theo]{Corollary}
\newtheorem{prop}[theo]{Proposition}
\newtheorem{remark}[theo]{Remark}
\vskip1cm

\section{Introduction}

Let $\sg$ be a finite dimensional complex simple Lie algebra and  $\widehat{\sg}=\sg\otimes\mc[t,t^{-1}]\oplus\mc c$ be the associated affine Lie algebra. Both quantum groups $U_q(\sg)$ and $U_q(\widehat{\sg})$ have finite dimensional simple modules which can be viewed as the quantization of simple modules over classic Lie algebras (\cite{CP1}, \cite{CP2}, \cite{CP3}).

In 1987, Drinfeld gave an extremely important realization of quantum affine algebras, then it was applied to construct the  affinization of quantum affine algebras. Such new algebras are called quantum toroidal algebras. Let $\sgt$ be the toroidal Lie algebra of $\sg$ with nullity $2$. The quantum toroidal algebra $U_q(\sgt)$ can be regarded as a quantum deformation of the enveloping algebra $U(\sgt)$ of $\sgt$. In the past decades, the  quantum toroidal algebras and their representations have been researched by many authors (see \cite{GKV}, \cite{FJW}, \cite{H}, \cite{M1}, \cite{M2}, \cite{M3}, \cite{Sa},  \cite{VV}).

Certainly, $\sgt$ has nontrivial finite dimensional modules. However,  no one has constructed some finite dimensional modules for $U_q(\sgt)$ while $q$ is generic. In this paper, we prove the following result.
\begin{theorem}
If $\sg$ is not of type $A_1$ and $q\in\mc^*$ is generic, then quantum toroidal algebra $U_q(\sgt)$ has no nontrivial finite dimensional simple module.
\end{theorem}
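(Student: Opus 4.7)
The plan is to work in the Drinfeld-type presentation of $U_q(\sgt)$, which provides generators $x_{i,k}^\pm$, $h_{i,l}$, $K_i^{\pm 1}$ (for $i$ a node of the finite Dynkin diagram of $\sg$, $k\in\mz$, $l\in\mz\setminus\{0\}$) together with the two central elements coming from the two affine directions. Let $V$ be a nontrivial finite dimensional simple $U_q(\sgt)$-module; I aim to derive a contradiction from the hypothesis that $\sg$ is not of type $A_1$ and $q$ is generic. Since $V$ is simple, the central elements act by scalars; applying relations in which these central elements appear through commutators of Cartan-type generators (producing a quantum integer times a power of the central element) forces the central charges to act as $1$ on $V$.

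Next, for each fixed vertex $i$, the subalgebra generated by $\{x_{i,k}^\pm, h_{i,l}, K_i^{\pm 1}\}$ is isomorphic to a quantum affine $U_q(\widehat{\fsl})$ (the ``vertical'' copy at $i$). Because the central charge is trivial on $V$, the Chari--Pressley classification applies: the restriction of $V$ to this subalgebra is a direct sum of tensor products of evaluation modules, encoded by a Drinfeld polynomial $P_i(u)\in\mc[u]$. In particular, the image of the sequence $(h_{i,l})_{l\in\mz}$ in $\mathrm{End}(V)$ is finite dimensional, and the eigenvalues of $h_{i,l}$ are determined by the roots of $P_i(u)$ through a fixed generating-series identity.

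The key ingredient, which forces the assumption $\sg\neq A_1$, is the cross commutator
\[
[\,h_{i,k},\, x_{j,l}^\pm\,]\;=\;\pm\,\frac{[k a_{ij}]_q}{k}\, x_{j,k+l}^\pm
\]
between two \emph{distinct} vertices $i\neq j$ with $a_{ij}\neq 0$; such a pair exists precisely because $\sg$ is not of type $A_1$. Genericity of $q$ ensures $[k a_{ij}]_q/k \neq 0$ for every $k\neq 0$, so this relation shifts the mode index $l$ by any integer at the cost of a nonzero scalar together with multiplication by $h_{i,k}$. Combining this with the finite dimensionality of the span of the $h_{i,l}$ inside $\mathrm{End}(V)$, the plan is to show that the operators $\{x_{j,l}^\pm\}_{l\in\mz}$ would span an infinite dimensional subspace of the finite dimensional algebra $\mathrm{End}(V)$ unless they all vanish on $V$. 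Hence $x_{j,l}^\pm=0$ on $V$ for every $j$ and $l$; combined with triviality of the central charges, this reduces the action of $U_q(\sgt)$ to the action of the Cartan part, which for a simple $V$ yields a one-dimensional trivial module, contradicting the nontriviality of $V$.

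The main obstacle I expect is in turning the formal shift identity into a genuine dimension count, because Chari--Pressley applied at the vertex $i$ already implies that $h_{i,k}$ for $|k|$ large is expressible in terms of the finitely many $h_{i,l}$ with $|l|$ small, so a crude counting argument will not suffice. To close this gap I would invoke the shift identity for a \emph{second} neighbour $i'$ of $j$ (or the Serre relations at the quantum toroidal level) and exploit that the two resulting $q$-difference constraints on the formal series $\sum_l x_{j,l}^\pm u^l$ are incompatible unless this series vanishes. The hypothesis $\sg\neq A_1$ enters here in two ways: it guarantees the existence of neighbours $i\neq j$, and in rank $\geq 2$ it provides enough independent constraints to rule out a periodic nontrivial action in $l$.
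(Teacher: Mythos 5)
Your proposal has a structural gap that I don't think can be repaired along the lines you sketch: essentially your entire argument takes place inside the subalgebra generated by the nodes of the \emph{finite} Dynkin diagram of $\sg$, i.e.\ inside the vertical quantum affine subalgebra $\cu_q(\sga)$ (nodes $1,\dots,n$). But that subalgebra is (an extension of) a quantum affine algebra at level zero, and by the very Chari--Pressley theory you invoke it has an abundance of nontrivial finite dimensional simple modules; the evaluation modules satisfy all of your cross-commutator ``shift'' relations $[h_{i,k},x^\pm_{j,l}]=\pm\frac{[ka_{ij}]}{k}x^\pm_{j,k+l}$ for every pair of neighbours $i,j$ with nonvanishing $x^\pm_{j,l}$. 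So the two (or more) $q$-difference constraints you hope are ``incompatible unless the series vanishes'' are in fact perfectly compatible --- note also that $h_{i,k}$ is not central, so the shift relation is an operator identity, not a scalar recurrence, and cannot by itself force vanishing. Relatedly, you misplace where $\sg\neq A_1$ enters: it is not the existence of neighbours (even the affine diagram $A_1^{(1)}$ has two adjacent nodes), but the invertibility of the $q$-deformed \emph{affine} Cartan matrix $([lc_{ij}]_i)_{0\le i,j\le n}$, which degenerates exactly in type $A_1^{(1)}$.

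The paper's proof shows what extra input is needed, and it is all concentrated at the node $0$. Using invertibility of the full $(n+1)\times(n+1)$ matrix $([lc_{ij}]_i)$ one builds, for each $l$, a combination $\xi(l)$ of the $a_i(l)$ over \emph{all} nodes with $[\xi(l),x^\pm_j(k)]=\pm x^\pm_j(l+k)$ simultaneously for $j=0,\dots,n$; from the annihilating polynomials (your Drinfeld-polynomial step, used correctly) one then assembles a single operator $\Xi$ whose commutator with every $x^\pm_j(m)$ is exactly $\pm x^\pm_j(m)$. Finite dimensionality then yields a vector $v'$ killed by all $x_j^+(m)$, $j=0,\dots,n$, $m\in\mz$, i.e.\ a highest weight vector for the \emph{horizontal} quantum affine algebra $U_q(\sga)$ (and all its shifted copies $A(m_0,\dots,m_n)$). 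The punchline is Kac--Moody theoretic: a finite dimensional highest weight module over an algebra of affine type forces $\sum_i s_i\lambda_i=0$ with all $\lambda_i\in\NN$, hence all $\lambda_i=0$, hence triviality. Your proposal never touches the horizontal affine structure (the imaginary root $\delta$, the identity $\Gamma=\gamma$), and without it the conclusion is simply false for the subalgebra you work in. To salvage your approach you would need to (i) include the node $0$ and justify the invertibility of the deformed affine Cartan matrix, and (ii) replace the ``incompatible constraints'' step by an actual highest-weight extraction plus the affine-type weight argument --- at which point you have reconstructed the paper's proof.
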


\section{Quantum toroidal algebras}

%\subsection{quantum toroidal algebras}

Let ${C}=(c_{i,j})_{0\leq i,j\leq n}$ be a symmetrizable  Cartan matrix. So there exists
a diagonal matrix $$D=diag(d_0,\cdots,d_n)$$ with $d_i\in\mz_+$ such that $\gcd(d_0,\cdots,d_n)=1$  and $DC$ is symmetric.

In this paper, we always assume that $(c_{i,j})_{1\leq i,j\leq n}$ is of finite type $X_n$ ($\not=A_1$) and
 $C$ is of affine type $X_n^{(1)}$. They are the Cartan matrices of Lie algebras $\sg$  and $\widehat{\sg}$, respectively.

For convenience, we use the following standard notations:
\beqs &&q_i=q^{d_i}, [m]_i=\frac{q_i^m-q_i^{-m}}{q_i-q_i^{-1}}, m\in\mz, \\
&&[m]_i!=\prod_{k=1}^m[k]_i, \left[{m\atop k}\right]_i=\frac{[m]_i!}{[k]_i![m-k]_i!}, m\geq 0.\eeqs

\subsection{quantum toroidal algebras}
\begin{defi} The quantum toroidal algebra $U_q(\sgt)$ is an associative $\mc(q)$-algebra generated by elements
$x_i^\pm(k), a_i(l), K_i^{\pm1},\gamma^{\pm\frac12}\, (i=0,1,\cdots,n,k\in\mz, l\in\mz\setminus\{0\}$ satisfying % the following relations:
\beq
&&\gamma^{\pm\frac12} {\rm\; are\; central\; and\;} K_iK_j=K_jK_i,\; K_i^{\pm1}K_i^{\mp1}=1,\\
&&[a_i(l), K_i^{\pm1}]=0,\\
&&[a_i(l), a_j(l')]=\delta_{l+l',0}\frac{[lc_{i,j}]_i}{l}\cdot\frac{\gamma^l-\gamma^{-l}}{q-q^{-1}},\\
&&K_ix_j^\pm(k)K_i^{-1}=q^{\pm c_{i,j}}x_j^\pm(k),\\
&&[a_i(l), x_j^\pm(k)]=\pm\frac{[lc_{i,j}]_i}{l}\gamma^{\mp\frac|l|2}x_j^\pm(l+k),\\
&&[x_i^\pm(k+1), x_j^\pm(k')]_{q_i^{\pm c_{i,j}}}=-[x_j^\pm(k'+1), x_i^\pm(k)]_{q_i^{\pm c_{i,j}}},\\
&&[x_i^+(k), x_j^-(k')]=\delta_{i,j}\frac{\gamma^{\frac{k-k'}2}\phi^+_i(k+k')-\gamma^{-\frac{k-k'}2}\phi^-_i(k+k')}{q_i-q_i^{-1}},
\eeq
and the $q$-Serre relations
\beqs Sym_{t_1,\ldots,t_m}\sum_{k=0}^{m=1-c_{i,j}}(-1)^k\left[\begin{array}{c}1-c_{i,j}\\k\end{array}\right]_ix_i^\pm(t_1)\cdots x_i^\pm(t_k)x_j(t)x_i^\pm(t_{k+1})\cdots x_i^\pm(t_m)=0, \eeqs
where  $Sym_{t_1,\ldots,t_m}$ denotes the symmetrization with respect to the indices $(t_1,\cdots,t_m)$
and \beqs&& \phi^\pm_i(z)=\sum_{m=0}^\infty\phi^\pm_i(\pm m)z^{\mp m}=K_i^{\pm1}\exp\left(\pm(q_i-q_i^{-1})\sum_{l=1}^\infty a_i(\pm l)z^{\mp l}\right).\eeqs
\end{defi}

\subsection{quantum affine algebras}

\begin{defi} The horizontal quantum affine algebra $U_q(\sga)$ is the subalgebra of $U_q(\sgt)$ generated by  elements
\beqs \{x_i^\pm(0), K_i^{\pm1}, (i=0,1,\cdots,n)\}.\eeqs
\end{defi}

Let $\alpha_i$ be the simple root associated to $x_i^+(0)$ and $\delta=\sum_{i=0}^ns_i\alpha_i$ the primitive imaginary root. Then
$\Gamma=\prod_{i=0}^nK_i^{s_i}$ is a central element.

\def\cu{\mathcal{U}}

\begin{defi} The vertical quantum affine algebra $\cu_q(\sga)$ is the subalgebra of $U_q(\sgt)$ generated by  elements
\beqs x_i^\pm(k), a_i(l), K_i^{\pm1},\gamma^{\pm\frac12}\, (i=1,\cdots,n,k\in\mz, l\in\mz\setminus\{0\}.\eeqs
\end{defi}

Adding $\Gamma^\frac12$ to $U_q(\sga)$,  it is well known that the extended algebra is isomorphic to  $\cu_q(\sga)$. In particular, there exists an isomorphism $\Theta$ such that
\beqs &&\Theta(x_i^\pm(0))=x_i^\pm(0),\,i=1,\cdots,n,\\
&&\Theta(\Gamma^\frac12)=\gamma^\frac12.\eeqs

\section{Highest weight modules over quantum affine algebras}

In this section, we introduce the notion {\bf highest weight module of Kac-Moody type}.

For arbitrary given $m_0,\cdots,m_n\in\mz$, let $A(m_0,\cdots, m_n)$  denote the subalgebra generated by $\{x_i^\pm(\pm m_i), (\gamma^{m_i}K_i)^{\pm1}, (i=0,1,\cdots,n)\}$.
\begin{lemma}
$A(m_0,\cdots, m_n)$ is isomorphic to $U_q(\sga)$.
\end{lemma}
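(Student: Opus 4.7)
The plan is to construct an isomorphism
$$\Phi_{\bm}: U_q(\sga) \longrightarrow A(m_0, \ldots, m_n)$$
by prescribing it on the Drinfeld--Jimbo generators of $U_q(\sga)$ and then checking that all defining relations are respected. Concretely, set
$$\Phi_{\bm}(x_i^\pm(0)) = x_i^\pm(\pm m_i), \qquad \Phi_{\bm}(K_i^{\pm 1}) = (\gamma^{m_i} K_i)^{\pm 1}\quad (0 \leq i \leq n).$$
Surjectivity onto $A(m_0, \ldots, m_n)$ is immediate from the definition of the latter; the real content is the well-definedness of $\Phi_{\bm}$ together with injectivity.

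For well-definedness, I would verify the four families of Drinfeld--Jimbo relations on the images. The torus relations and the weight-shift relation $(\gamma^{m_i} K_i)\, x_j^\pm(\pm m_j)\, (\gamma^{m_i} K_i)^{-1} = q^{\pm c_{i,j}} x_j^\pm(\pm m_j)$ follow at once from the centrality of $\gamma^{\pm 1/2}$ together with the toroidal relation $K_i x_j^\pm(k) K_i^{-1} = q^{\pm c_{i,j}} x_j^\pm(k)$. The crucial $[x^+, x^-]$-relation is obtained by specialising the toroidal identity
$$[x_i^+(k), x_j^-(k')] = \delta_{i,j} \frac{\gamma^{(k-k')/2} \phi_i^+(k+k') - \gamma^{-(k-k')/2} \phi_i^-(k+k')}{q_i - q_i^{-1}}$$
to $k = m_i,\; k' = -m_j$. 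For $i = j$ this yields
$$[x_i^+(m_i), x_i^-(-m_i)] = \frac{\gamma^{m_i} \phi_i^+(0) - \gamma^{-m_i} \phi_i^-(0)}{q_i - q_i^{-1}} = \frac{(\gamma^{m_i} K_i) - (\gamma^{m_i} K_i)^{-1}}{q_i - q_i^{-1}}$$
after substituting $\phi_i^\pm(0) = K_i^{\pm 1}$, while for $i \neq j$ the right-hand side vanishes. Finally, the Drinfeld--Jimbo $q$-Serre relations follow from the toroidal symmetrized Serre relations by setting $t_1 = \cdots = t_{1-c_{i,j}} = \pm m_i$ and $t = \pm m_j$: the symmetrization then produces an overall $(1-c_{i,j})!$ which can be cancelled, leaving the standard Serre identity on the images.

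The main obstacle is injectivity, since a priori $\Phi_{\bm}$ could collapse relations by identifying elements that are distinct in $U_q(\sga)$. To rule this out, I would use a PBW-type independence argument inside $U_q(\sgt)$: an ordered PBW basis of $U_q(\sga)$ in its Drinfeld--Jimbo form maps under $\Phi_{\bm}$ to a family of ordered monomials in the shifted currents $x_i^\pm(\pm m_i)$ and the elements $(\gamma^{m_i} K_i)^{\pm 1}$, all of which are of the same shape as the corresponding unshifted PBW monomials. Invoking the known PBW structure of $U_q(\sgt)$ compatible with the triangular decomposition by Drinfeld currents, these images remain linearly independent in $U_q(\sgt)$, so $\Phi_{\bm}$ is injective. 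Combined with the well-definedness and surjectivity above, this yields the claimed isomorphism $U_q(\sga) \cong A(m_0, \ldots, m_n)$.
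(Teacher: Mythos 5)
Your proposal uses exactly the same map as the paper, namely $x_i^\pm(0)\mapsto x_i^\pm(\pm m_i)$, $K_i^{\pm1}\mapsto(\gamma^{m_i}K_i)^{\pm1}$; the paper merely asserts that this map "defines an isomorphism," whereas you additionally verify the defining relations (correctly, using the centrality of $\gamma$ and the specialization $k=m_i$, $k'=-m_j$ of the $[x^+,x^-]$ relation) and flag injectivity as the real issue. So this is essentially the paper's argument with the omitted details filled in, and if anything your sketch is more careful than the original.
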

\begin{proof}
%Straightforward.
In fact, the map
\beqs \pi: x_i^\pm(0)\mapsto x_i^\pm(\pm m_i), K_i^{\pm1}\mapsto (\gamma^{m_i}K_i)^{\pm1}\eeqs
defines an isomorphism of algebras. In the following we shall identify $A(m_0,\cdots, m_n)$ as $U_q(\sga)$ by this isomorphism.
\end{proof}

Let $N^\pm$ be the subalgebra generated by $ \{x_i^\pm(0), (i=0,1,\cdots,n)\}$ and let $N^0$ be the Laurent polynomial algebra $\mc(q)[K_0^{\pm1},\cdots,K_n^{\pm1}]$, then
\beqs U_q(\sga)&=&N^-\cdot N^0\cdot N^+.\eeqs

\begin{defi}
(a) Suppose that $V$ is a $U_q(\sga)$-module and $v\in V$. If $x_i^+(0)v=0, K_i^{\pm}v=a_i^{\pm1}v$ for all $i$, then $v$ is called a highest weight vector of Kac-Moody type.

(b) A module generated by a highest weight vector of Kac-Moody type is called a highest weight module of Kac-Moody type.

(c) If $v$ is a highest weight vector of Kac-Moody type, then $\mc v$ is a one dimensional module over $N^0\cdot N^+$. The induced module $M(v)=U_q(\sga)\otimes_{N^0\cdot N^+}\mc v$ is called a Verma module of  Kac-Moody type.
\end{defi}

\begin{lemma}
Any highest weight module of Kac-Moody type is a quotient of some Verma module of Kac-Moody type.
\end{lemma}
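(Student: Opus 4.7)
The plan is to use the universal property of induced modules. Let $V$ be a highest weight module of Kac-Moody type generated by a highest weight vector $v\in V$ with $x_i^+(0)v=0$ and $K_i^{\pm1}v=a_i^{\pm1}v$ for all $i=0,1,\dots,n$ and some scalars $a_i\in\mc(q)^*$. The first step is to form the one-dimensional $N^0\cdot N^+$-module $\mc v_0$ on which $x_i^+(0)$ acts as zero and $K_i^{\pm1}$ acts as $a_i^{\pm1}$; this is well-defined because $N^+$ is generated by the $x_i^+(0)$ and the relations $K_ix_j^+(0)K_i^{-1}=q^{c_{i,j}}x_j^+(0)$ are automatically satisfied when $x_j^+(0)$ acts by zero. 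Form the associated Verma module $M(v_0)=U_q(\sga)\otimes_{N^0\cdot N^+}\mc v_0$ as in Definition~3.1(c).

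Next I would construct the comparison map. Since $\mc v\subset V$ is a sub-$(N^0\cdot N^+)$-module isomorphic to $\mc v_0$, the inclusion $\mc v_0\hookrightarrow V$ sending $v_0\mapsto v$ is an $(N^0\cdot N^+)$-homomorphism. By the universal property of the induced (tensor) module, this extends uniquely to a $U_q(\sga)$-homomorphism
\begin{equation*}
\phi:\;M(v_0)\longrightarrow V,\qquad \phi(u\otimes v_0)=u\cdot v,\quad u\in U_q(\sga).
\end{equation*}
The image of $\phi$ is a $U_q(\sga)$-submodule of $V$ containing $v$, and since $V$ is by hypothesis generated as a $U_q(\sga)$-module by $v$, the map $\phi$ is surjective. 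Therefore $V\cong M(v_0)/\ker\phi$, exhibiting $V$ as a quotient of a Verma module of Kac-Moody type.

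There is essentially no main obstacle here: the statement is a formal consequence of the triangular decomposition $U_q(\sga)=N^-\cdot N^0\cdot N^+$ established just before Definition~3.1 and the universal property of tensor-induction. The only point that requires a brief verification is the well-definedness of the $(N^0\cdot N^+)$-action on $\mc v_0$, which follows immediately because the commutation relations among the $K_i^{\pm1}$ and $x_j^+(0)$ hold trivially on a vector annihilated by every $x_j^+(0)$.
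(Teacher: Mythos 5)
Your proof is correct and is exactly the standard universal-property argument that the paper compresses into ``It follows by the definition''; you have simply written out the details the author left implicit. No issues.
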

\begin{proof}It follows by the definition.
\end{proof}

\begin{lemma}
Assume that $v$ is a highest weight vector  of Kac-Moody type and $V$ is a simple $U_q(\sga)$-module  generated by $v$. If $\dim V<\infty$, then $V=\mc v$ is trivial.
\end{lemma}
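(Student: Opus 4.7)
The plan is to combine the integrability forced by $\dim V<\infty$ with the infinitude of the Weyl group of the affine Cartan matrix $C$ to show that the highest weight of $V$ must be trivial. First, since $\dim V<\infty$, each $x_i^\pm(0)$ acts nilpotently on $V$, so the subalgebra $\mathcal{A}_i:=\la x_i^\pm(0),K_i^{\pm1}\ra\cong U_{q_i}(\fsl)$ acts on $V$ as a direct sum of finite-dimensional simple modules for every $i=0,\ldots,n$; in particular $V$ is integrable. The cyclic $\mathcal{A}_i$-submodule $\mathcal{A}_i\cdot v$ is therefore a finite-dimensional highest-weight $U_{q_i}(\fsl)$-module, so the generic-$q$ classification of such modules forces
\[ a_i=\epsilon_i\,q_i^{m_i},\qquad \epsilon_i\in\{\pm1\},\ m_i\in\mz_{\geq0},\qquad (x_i^-(0))^{m_i+1}v=0. \]

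\textbf{Main step.} I then claim $(m_0,\ldots,m_n)=(0,\ldots,0)$. Integrability of $V$ implies, via Lusztig's braid operators (or a direct $\fsl$-reflection argument), that the multiset of weights of $V$ is invariant under the Weyl group $W(C)$ associated to $C$; thus the orbit $W(C)\cdot\lambda$ is contained in the finite weight support of $V$ and is therefore finite. Since $C$ is of affine type $X_n^{(1)}$, $W(C)$ is infinite, and a direct computation with the affine-Weyl-group translation subgroup shows that a weight in $\lambda-\sum_i\mz_{\geq0}\alpha_i$ has finite $W(C)$-orbit only when it is a scalar multiple of the imaginary root $\delta=\sum_i s_i\alpha_i$. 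This forces $\lambda\in\mc\delta$, whence $\la\alpha_i^\vee,\lambda\ra=0$ and therefore $m_i=0$ for every $i$. With every $m_i=0$, each $\mathcal{A}_i$-string through $v$ has length zero, so $x_i^-(0)v=0$; combined with $x_i^+(0)v=0$ and the fact that $v$ generates $V$, we conclude $V=\mc v$.

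\textbf{Main obstacle.} The crux is the middle step, where one must rigorously establish $W(C)$-invariance of the weight support of $V$ (through Lusztig's braid operators or a direct $\fsl$-reflection argument) and then exploit the affine-Weyl-group translation lattice to rule out all non-trivial highest weights. This is precisely the point at which the hypothesis that $C$ is of affine (rather than finite) type enters essentially; for $C$ of finite type, the analogous statement of this lemma simply fails, as non-trivial finite-dimensional highest-weight modules then abound.
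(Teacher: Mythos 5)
Your proof is correct in outline but takes a genuinely different route from the paper's. The paper never touches the affine Weyl group: it transports $V$ to the vertical copy $\cu_q(\sga)$ via the isomorphism $\Theta$, notes that if the central element $\gamma$ (the image of $\Gamma^{\frac12}$ squared, $\Gamma=\prod_iK_i^{s_i}$) did not act by $\pm1$ then $a_1(1)|_V,a_1(-1)|_V$ would generate a Weyl algebra on a finite-dimensional space (impossible by Block's theorem), and hence $\Gamma|_V=\pm1$; combined with your first step ($K_iv=\epsilon_iq_i^{m_i}v$, $m_i\in\NN$), the positivity of the marks $s_i$ and the genericity of $q$ then give $\sum_is_id_im_i=0$, so all $m_i=0$. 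Your argument replaces this ``level zero via the Heisenberg subalgebra'' step with the classical Kac--Moody mechanism: $W(C)$-invariance of the weight multiset plus the translation subgroup of the affine Weyl group. That is a valid and standard alternative --- the rank-one string symmetry for each $\mathcal{A}_i\cong U_{q_i}(\fsl)$ gives $\dim V_\mu=\dim V_{s_i\mu}$, and the translation computation does force $\la\lambda,\alpha_i^\vee\ra=0$ for every $i$ --- and it has the advantage of staying entirely inside the Drinfeld--Jimbo presentation of $U_q(\sga)$, with no appeal to $\Theta$, to the generators $a_i(l)$, or to the toroidal structure at all. The costs are that your middle step is only sketched (you should either carry out the reflection and translation computations or cite them precisely, e.g.\ Jantzen or Lusztig for integrable modules over quantized Kac--Moody algebras, taking care of the sign twists $\epsilon_i$), and one phrase is imprecise: $\lambda$ need not lie in the root lattice, so ``scalar multiple of $\delta$'' should read ``vanishes on every coroot $\alpha_i^\vee$'' (equivalently, level zero and trivial classical part), which is exactly what you then use. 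Neither issue is a genuine gap; both routes reach the same conclusion that $v$ is annihilated by all $x_i^\pm(0)$, whence $V=\mc v$.
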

\begin{proof}
First we claim that any simple finite dimensional $U_q(\sga)$-module $V$ is a simple $\cu_q(\sga)$-module.

Note that $\gamma$ is central and it acts as a scalar over $V$. By relation $[a_1(1), a_1(-1)]=[2]_1\frac{\gamma-\gamma^{-1}}{q-q^{-1}}$, if $\gamma\not=\pm1$,  the subalgebra generated by  $a_1(1)|_V, a_1(-1)|_V$ is isomorphic to the Weyl algebra $\mathfrak{A}$, which has no finite dimensional module (\cite{B}). So $\Gamma|_V=\gamma|_V=\pm1$.

For each $i$, $v$ is a highest weight vector of the quantum group of type $A_1$ generated by $x_i^\pm(0), K_i^{\pm1}$. So $\dim V<\infty$ implies $K_iv=\varepsilon_i q^{\lambda_i}v$ such that $\lambda_i\in\mathbb{N}, \varepsilon_i\in\{1,-1\}$ (see Theorem 2.6 of \cite{Jan}).

Moreover, we infer that $q^{s_0\lambda_0+\cdots+s_n\lambda_n}\in\{1,-1\}$. Since $q$ is generic, we have $s_0\lambda_0+\cdots+s_n\lambda_n=0$ and  $\lambda_0=\cdots=\lambda_n=0$. Then the Verma module $M(v)$ has a unique maximal submodule $J$ generated by $\{x_i^-(0)v|i=0,1,\cdots\}$. So $V=M(v)/J\cong \mc(q) v$.
\end{proof}

\section{Proof for main theorem}

 Throughout this section, we always assume that $q$ is  generic and $V$ is a finite-dimensional simple $U_q(\sgt)$-module. So $\gamma|V=\pm1$. For convenience, we assume $\gamma|V=1$. The proof for $\gamma|V=-1$ is very similar.

Define matrix $C(q,l)=([lc_{i,j}]_i)_{0\leq i,j\leq n}$ for all $l\not=0$. Then there exits a polynomial $\det_\sg$ such that
\beqs \det(C(q,l))&=&\frac{\det_\sg(q^l)}{\prod_{i=0}^n(q_i-q_i^{-1})}.\eeqs
The polynomial $\det_\sg$ is explicitly give by the following tabular:
\vskip3mm
\centerline{ }
\begin{tabular}{|c|c|}
\hline Type  of $\sg$& $\det_\sg(q)$\\
\hline
${A_n} (n\geq2)$&$(1-q^{-2})^{n+1}(q^{n+1}-1)^2$\\
${B_n}(n\geq2)$&$(q^2-1)(q^4-1)^{n}q^{-6}(q^4-q^{-4})$\\
${C_n} (n\geq3)$&$(q-q^{-1})^{n+1}(q+q^{-1})^3q^{n-3}$\\
${D_n} (n\geq4)$&$(q-q^{-1})^{n+2}(q+q^{-1})^2(q+1)(q^{n-3}-q^{-n+2})$\\
${E_6} $&$(q-q^{-1})^6(q^2-q^{-2})(q^{3}-q^{-3})^2$\\
${E_7} $&$(q-q^{-1})^7(q^2-q^{-2})(q^{3}-q^{-3})(q^4-q^{-4})$\\
${E_8} $&$(q-q^{-1})^8(q^2-q^{-2})^3(q^{3}-q^{-3})(q^5-q^{-5})$\\
${F_4} $&$(q-q^{-1})(q^2-q^{-2})^3(q^{3}-q^{-3})(q^4-q^{-4})(q^5-q^{-5})$\\
${G_2} $&$q^{-10}(q^2-1)(q^4-1)(q^6-1)^3$\\
\hline\end{tabular}

\subsection{some useful lemmas}

\begin{lemma}
$C(q,l)$ is invertible for all $l\not=0$.\end{lemma}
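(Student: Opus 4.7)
My plan is to reduce invertibility to the non-vanishing of the scalar $\det_\sg(q^l)$ displayed in the table, and then read off non-vanishing from the shape of that polynomial together with the genericity of $q$.

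First I would justify the formula $\det(C(q,l)) = \det_\sg(q^l)\big/\prod_{i=0}^n(q_i-q_i^{-1})$ itself. Writing $[lc_{i,j}]_i = (q_i^{lc_{i,j}}-q_i^{-lc_{i,j}})/(q_i-q_i^{-1})$, one may factor $1/(q_i-q_i^{-1})$ out of the $i$-th row of $C(q,l)$, which accounts for the denominator. What remains is the matrix $M(q,l) = (q_i^{lc_{i,j}}-q_i^{-lc_{i,j}})_{i,j}$, and one has to show that $\det M(q,l)$ equals $\det_\sg(q^l)$ on a type-by-type basis. This is a direct but tedious computation: the affine Cartan matrices $X_n^{(1)}$ are explicit, the diagonal entries $d_i$ are the standard ones (with $d_0 = 1$ in untwisted type), so for each Dynkin type one writes out the $(n+1)\times(n+1)$ matrix $M(q,l)$ and computes its determinant by row/column reduction. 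I would do this once carefully for one type (say $A_n$, where the matrix is tridiagonal up to the extra affine node, giving a clean recursion on $n$) and indicate that the other types are analogous.

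Once the table is established, the invertibility claim reduces to showing $\det_\sg(q^l) \neq 0$ for $q$ generic and $l\neq 0$, because the denominators $q_i - q_i^{-1}$ are nonzero for generic $q$. Inspecting the table, every $\det_\sg$ is a Laurent polynomial all of whose zeros are roots of unity (the factors are of the shapes $q^k - q^{-k}$, $q^k - 1$, $q^k + 1$, or pure powers of $q$). Since \emph{generic} means $q$ is not a root of unity, the same holds for $q^l$ for any nonzero integer $l$, and hence none of the factors of $\det_\sg(q^l)$ vanish. This gives $\det(C(q,l)) \neq 0$, so $C(q,l)$ is invertible.

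The main obstacle is clearly the table itself: there is no entirely uniform derivation of $\det_\sg(q)$ across all affine types, and each of the nine cases has to be checked by a direct determinant computation from the corresponding affine Cartan matrix. A cleaner alternative, should one want to avoid the case-check, would be to observe that $\det(C(q,l))$ is a Laurent polynomial in $q$ and that it suffices to exhibit \emph{some} specialization at which it is nonzero; unfortunately the obvious limit $q \to 1$ recovers $l\cdot C$ (affine Cartan), whose determinant is $0$, so one would still need a more delicate asymptotic analysis in $q-1$ to extract the leading nonzero term. The case-by-case route via the explicit table is the most transparent, and that is what I would carry out.
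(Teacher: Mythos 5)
Your proof is correct and follows the same route the paper intends: the paper's proof is literally the single word ``Straightforward,'' with the table of $\det_\sg$ displayed immediately above the lemma, so the intended argument is exactly yours — pull the factors $1/(q_i-q_i^{-1})$ out row by row, invoke the tabulated $\det_\sg(q^l)$, and observe that every factor in the table vanishes only at roots of unity, which $q^l$ is not for generic $q$ and $l\neq 0$. The only thing you add beyond what the paper asks the reader to accept is the sketch of how to verify the table type by type (your $A_n$ computation via the circulant structure is the standard one), and your remark that the naive $q\to 1$ shortcut fails because the affine Cartan matrix is singular is a correct and worthwhile caution.
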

\begin{proof}
Straightforward.
\end{proof}

\begin{lemma}
There exist elements $\xi(l)$ for all $l\in\mz\setminus\{0\}$ such that
\beq [\xi(l), x_j^\pm(k)]&=&\pm x_j^\pm(l+k), \quad\forall k\in\mz, j=0,1,\cdots,n.\eeq
\end{lemma}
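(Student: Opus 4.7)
The plan is to construct $\xi(l)$ as a linear combination of the imaginary root vectors $a_0(l), a_1(l), \ldots, a_n(l)$, and then use the invertibility of $C(q,l)$ (Lemma~4.1) to solve for the coefficients.

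Concretely, I seek $\xi(l)$ in the form
\begin{equation*}
\xi(l) = \sum_{i=0}^n \alpha_i(l)\, a_i(l)
\end{equation*}
for scalars $\alpha_i(l) \in \mc(q)$ to be determined. Since each defining relation gives
\begin{equation*}
[a_i(l), x_j^\pm(k)] = \pm \frac{[lc_{i,j}]_i}{l}\, \gamma^{\mp |l|/2}\, x_j^\pm(l+k),
\end{equation*}
and since we are working in the situation where $\gamma$ acts as $1$ on $V$, the desired identity $[\xi(l), x_j^\pm(k)] = \pm x_j^\pm(l+k)$ reduces (for every $j = 0, 1, \ldots, n$) to the single scalar condition
\begin{equation*}
\sum_{i=0}^n \alpha_i(l)\, \frac{[lc_{i,j}]_i}{l} = 1.
\end{equation*}

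This is a linear system whose coefficient matrix is, up to the overall factor $1/l$, precisely $C(q,l) = ([lc_{i,j}]_i)_{0 \le i,j \le n}$. By the immediately preceding Lemma~4.1, $C(q,l)$ is invertible for every $l \neq 0$, so the system admits a unique solution $\alpha(l) = l\, C(q,l)^{-1} \mathbf{1}$, where $\mathbf{1}$ is the all-ones column vector. Taking $\xi(l)$ with these coefficients then gives the desired element acting on $V$, with both signs $\pm$ handled simultaneously because $\gamma^{\mp|l|/2} = 1$ on $V$.

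There is essentially no obstacle here: the only nontrivial input is the invertibility of $C(q,l)$, which has already been established. The argument is purely linear algebra applied to the explicit commutation relation between the Drinfeld generators $a_i(l)$ and $x_j^\pm(k)$, together with the simplification afforded by the normalization $\gamma|_V = 1$.
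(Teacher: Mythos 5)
Your proof is correct and follows essentially the same route as the paper: write $\xi(l)=\sum_i\alpha_i a_i(l)$, apply relation (2.5), and invert $C(q,l)$ (Lemma 4.1) to solve for the coefficients. You are in fact slightly more careful than the paper's own proof, which silently drops the factor $\gamma^{\mp|l|/2}$ appearing in (2.5); you explicitly invoke the standing normalization $\gamma|_V=1$ to discard it, so that the $+$ and $-$ cases genuinely reduce to one and the same linear system. One cosmetic slip: since the system reads $\alpha^T C(q,l)=l\mathbf{1}^T$, the solution is $\alpha=l\bigl(C(q,l)^T\bigr)^{-1}\mathbf{1}$ rather than $l\,C(q,l)^{-1}\mathbf{1}$ (note $C(q,l)$ need not be symmetric because of the $1/(q_i-q_i^{-1})$ normalization in $[\,\cdot\,]_i$), but this does not affect the argument, as invertibility is all that is used.
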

\begin{proof}
Because $C(q,l)$ is invertible, let $(\xi_0,\cdots,\xi_n)$ be the unique solution of
\beqs (\xi_0,\cdots,\xi_n)C(q,l)=(l,\cdots,l),\eeqs
and let $\xi(l)=\sum_{i=0}^n\xi_ia_i(l)$. Then
\beqs [\xi(l), x_j^\pm(k)]=\pm\sum_{i=0}^n\xi_i\frac{[lC_{i,j}]_i}{l}x_j^\pm(l+k)=\pm x_j^\pm(l+k).\eeqs
\end{proof}

For convenience, we write
\beq x_i^\pm\otimes f(z)&:=&\sum f_jx_i^\pm(j)\eeq
for all $0\leq i\leq n$ and $f(z)=\sum f_j z^j\in\mc[z, z^{-1}]$.

\begin{lemma}
There exists $f(z)=\sum_{j=0}^nf_jz^j\in\mc[z]$ such that $f_0=-1$ and $x_i^\pm\otimes f(z)V=0$ for each $i$.
\end{lemma}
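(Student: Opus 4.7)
The plan is to exploit the finite dimensionality of $\operatorname{End}(V)$ together with the shift operators $\xi(\pm 1)$ from Lemma 4.2 to build $f$ index by index, then combine into a single polynomial. For each fixed $i\in\{0,\ldots,n\}$ and sign $\pm$, the operators $\{\,x_i^\pm(k)|_V\mid k\ge 0\,\}$ lie in the finite dimensional space $\operatorname{End}(V)$ and are therefore linearly dependent. This produces a nonzero polynomial $p(z)\in\mc[z]$ with $x_i^\pm\otimes p(z)\cdot V=0$, so the set
\[ I_i^\pm \;:=\; \{\,p(z)\in\mc[z]\;\mid\; x_i^\pm\otimes p(z)\cdot V=0\,\} \]
is a nonzero $\mc$-subspace of $\mc[z]$.

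Next I will show that each $I_i^\pm$ is in fact an ideal of $\mc[z]$ whose generator has nonzero constant term. Closure under multiplication by $z$ comes straight from Lemma 4.2: given $p=\sum_k c_k z^k\in I_i^\pm$, the identity $[\xi(1),x_i^\pm(k)]=\pm x_i^\pm(k+1)$ gives
\[ \sum_k c_k\, x_i^\pm(k+1)|_V \;=\; \pm\Bigl[\,\xi(1)|_V,\;\sum_k c_k\, x_i^\pm(k)|_V\,\Bigr] \;=\; 0, \]
so $zp(z)\in I_i^\pm$. A symmetric computation with $\xi(-1)$ shows that if $p\in I_i^\pm$ satisfies $p(0)=0$, then $p(z)/z\in I_i^\pm$ as well. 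Writing $I_i^\pm=(p_i^\pm(z))$ in the PID $\mc[z]$, this latter property forces $p_i^\pm(0)\neq 0$: otherwise $p_i^\pm(z)/z$ would be an element of $I_i^\pm$ of strictly smaller degree than its generator, a contradiction.

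Finally, put
\[ g(z) \;:=\; \prod_{i=0}^n p_i^+(z)\, p_i^-(z), \]
which is a common multiple of every $p_i^\pm$ and hence lies in every $I_i^\pm$, while $g(0)=\prod_i p_i^+(0)p_i^-(0)\neq 0$. The rescaling $f(z):=-g(z)/g(0)$ then satisfies $f_0=-1$ and $x_i^\pm\otimes f(z)\cdot V=0$ for every $i$, as required.

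The only place where a genuine computation is needed is the bracket identity establishing $z$-stability of $I_i^\pm$; the rest is purely formal, using the PID structure of $\mc[z]$. The main conceptual point is to notice that Lemma 4.2 lets one transfer shifts in the loop index into operator-theoretic brackets on $V$, which is what upgrades the naive linear dependence coming from $\dim\operatorname{End}(V)<\infty$ into the structured ideal-theoretic statement needed to produce a \emph{single} polynomial simultaneously killing all $2(n+1)$ currents.
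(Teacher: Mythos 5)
Your proposal is correct and proceeds by essentially the same mechanism as the paper: use finite-dimensionality of $\operatorname{End}(V)$ to get a nonzero annihilating polynomial for each current, use Lemma 4.2 (bracketing with $\xi(\pm1)$, or more generally $\xi(l)$) to show that any multiple of an annihilating polynomial again annihilates, and then take the product over all $i$ and both signs. The paper writes the argument tersely by picking arbitrary Laurent annihilators $g_i^\pm\in\mc[z,z^{-1}]$, multiplying them, and clearing the Laurent factor by a suitable $cz^m$; you instead organize the same facts into the statement that each $I_i^\pm$ is an ideal of $\mc[z]$ whose generator has nonzero constant term, so that the constant-term normalization falls out of the PID structure rather than an ad hoc $z^m$ shift. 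The ideal-theoretic framing is a modest gain in transparency but is not a genuinely different route.
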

\begin{proof}
Since $\dim V<\infty$, there are polynomials $g^\pm_i(z)\in\mc[z, z^{-1}]$ such that
\beqs x_i^\pm\otimes g_i^\pm(z)V=0.\eeqs

If $p(z)=g_i^\pm(z)h(z)$, then $x_i^\pm\otimes p(z)=\pm[\sum h_l\xi(l), x_i^\pm\otimes g_i^\pm(z)]$,
then $x_i^\pm\otimes p(z)V=0$.
Let $f(z)=cz^mg_0^-(z)\cdots g_n^-(z)g_0^+(z)\cdots g_n^+(z)$, where $c\in \mc^*$ and $m\in\mz$ such that $f(z)\in\mc[z]$ and $f(0)=-1$.
\end{proof}

Let $\Xi=\sum_{l=1}^{\deg(f)}f_l\xi(l)$. Assume $v\in V$ is an eigenvector of $\Xi$ with eigenvalue $\lambda$.

\begin{lemma}
For all $0\leq i_1,\cdots, i_k\leq n$ and $m_1, \cdots, m_k\in\mz$, we have
\beq \Xi\cdot(x^\pm_{i_1}(m_1)\cdots x^\pm_{i_k}(m_k)v)=(\lambda\pm k)x^\pm_{i_1}(m_1)\cdots x^\pm_{i_k}(m_k)v.\eeq
\end{lemma}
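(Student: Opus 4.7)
The plan is to reduce the statement to a commutator identity $[\Xi, x_j^\pm(m)] = \pm x_j^\pm(m)$ (as operators on $V$) for every $j$ and every $m\in\mz$, and then to run a straightforward induction on the length $k$ of the monomial.

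First I would strengthen Lemma 4.3 by observing that it is not only $f(z)$ itself, but every Laurent shift $z^m f(z)$ $(m\in\mz)$, that annihilates $V$ through $x_j^\pm$. Indeed, the proof of Lemma 4.3 produces $f(z)$ as a multiple of each $g_j^\pm(z)$, so for any $m\in\mz$ and any $j$, $z^m f(z)$ is again a Laurent-polynomial multiple of $g_j^\pm(z)$. The commutator trick in the proof of Lemma 4.3 (using the $\xi(l)$'s from Lemma 4.2 to shift arguments of $x_j^\pm$) then gives
\[
\sum_{l=0}^{\deg f} f_l\, x_j^\pm(l+m)\, V \;=\; 0
\quad\text{for every }j\text{ and every }m\in\mz.
\]
Since $f_0=-1$, this rearranges to $\sum_{l=1}^{\deg f} f_l\, x_j^\pm(l+m) = x_j^\pm(m)$ as operators on $V$.

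Now I would compute the commutator of $\Xi=\sum_{l=1}^{\deg f} f_l\,\xi(l)$ with $x_j^\pm(m)$. By Lemma 4.2,
\[
[\Xi,\, x_j^\pm(m)] \;=\; \sum_{l=1}^{\deg f} f_l\,[\xi(l),\, x_j^\pm(m)] \;=\; \pm \sum_{l=1}^{\deg f} f_l\, x_j^\pm(l+m),
\]
and by the identity from the previous paragraph, this equals $\pm x_j^\pm(m)$ on $V$. Thus, as operators on $V$,
\[
\Xi\, x_j^\pm(m) \;=\; x_j^\pm(m)\,\Xi \;\pm\; x_j^\pm(m).
\]

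Finally, fix one sign choice throughout and proceed by induction on $k$. The base case $k=0$ is the hypothesis $\Xi v = \lambda v$. For the inductive step, set $w = x^\pm_{i_2}(m_2)\cdots x^\pm_{i_k}(m_k)v$; by the inductive hypothesis $\Xi w = (\lambda\pm(k-1))w$, and using the commutator relation just established,
\[
\Xi\cdot x^\pm_{i_1}(m_1)w \;=\; x^\pm_{i_1}(m_1)\Xi w \pm x^\pm_{i_1}(m_1)w \;=\; (\lambda\pm k)\, x^\pm_{i_1}(m_1)w,
\]
which closes the induction. The only place where genuine care is needed is the extension of Lemma 4.3 to all integer shifts $z^m f(z)$ with $m\in\mz$ (in particular $m<0$); everything else is then essentially formal.
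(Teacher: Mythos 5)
Your proof is correct and follows essentially the same route as the paper: the paper's one-line argument likewise rests on $x_i^\pm\otimes z^m f(z)\,V=0$ together with the computation $[\Xi|_V, x_i^\pm(m)|_V]=\pm\bigl(x_i^\pm\otimes z^mf(z)+x_i^\pm(m)\bigr)|_V=\pm x_i^\pm(m)|_V$, with the induction on $k$ left implicit. Your write-up merely makes explicit the two details the paper glosses over (that all shifts $z^mf(z)$, $m\in\mz$, annihilate $V$, and the induction), which is a faithful elaboration rather than a different proof.
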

\begin{proof}
It follows from  $x_i^\pm\otimes z^mf(z)V=0$ and
\beqs [\Xi|_V, x_i^\pm(m)|_V]=\pm(x_i^\pm\otimes z^mf(z)+x_i^\pm(m))|_V=\pm x_i^\pm(m)|_V.\eeqs
\end{proof}

\subsection{proof of main theorem}

Because $V$ is finite-dimensional, by (4.3), there exists $v'$ of $U_q(\sga)$ such that $\Xi\cdot v'=\lambda'v'$
and
\beq x_i^+(m)v'&=&0, \quad\forall m\in\mz, i=0,\cdots,n.\eeq

%For given $m_0,\cdots, m_n\in\mz$, the subalgebra generated $x_i^\pm(\pm m_i), K_i^{\pm1}(i=0,1,\cdots,n)$ is isomorphic to the (adjoint type) quantum group $U_q(\sga)$, which is  of type $X_l^{(1)}$.

By (4.4), $U_q(\sga)v'$ is a finite dimensional highest weight module of $A(m_0,m_1,\cdots,m_n)$. By Lemma 3.4, $\dim A(m_0,m_1,\cdots,m_n)v'<\infty$ and Theorem 2.6 of \cite{Jan},  we have $K_iv'=\epsilon_iv',\epsilon_i\in\{1,-1\}$ for all $i=0,\cdots,n$.

If there exists $v'':=x_i^-(m_i)v'\not=0$ for some index $i$ and integer $m_i$, then $v''$ is also a highest weight vector of $U_q(\sga)$ and $K_iv''=\pm q_i^{-2}v''\not=\pm v''$, this forces $\dim V=\infty$, a contradiction. So we also have $x_i^-(m)v'=0$ for all $m\in\mz$ and $i=0,1,\cdots,n$.  This by (2.7) also implies
\beqs a_i(l)v'=0, l\in\mz\setminus\{0\}, i=0,\cdots,n.\eeqs
So $V=\mc v'$ is  trivial.

\vskip30pt \centerline{\bf ACKNOWLEDGMENTS}

\vskip10pt The author gratefully acknowledges the partial financial support from the NNSF (Nos. 11871249, 11771142) and the Jiangsu Natural Science Foundation (No. BK20171294).  Part of this work was done during the author's visiting Paris Diderot. The author would like to thank Prof. Marc Rosso for warm hospitality and helpful discussions.

\end{document}